\documentclass[preprint,12pt]{elsarticle}

\usepackage[T1]{fontenc}
\usepackage{lmodern}
\usepackage{amsmath,amssymb,amsthm,mathtools}
\usepackage{microtype}
\usepackage[hidelinks,bookmarksnumbered=true]{hyperref}
\journal{European Journal of Combinatorics}
\biboptions{numbers,sort&compress}
\hypersetup{
  pdftitle={Excluding a line from gammoids},
  pdfauthor={Zhen Chen}
}

\DeclareMathOperator{\cl}{cl}
\DeclareMathOperator{\si}{si}
\newcommand{\cL}{\mathcal{L}}
\numberwithin{equation}{section}
\theoremstyle{plain}
\newtheorem{thm}{Theorem}[section]
\newtheorem{lem}[thm]{Lemma}
\newtheorem{prop}[thm]{Proposition}
\newtheorem{cor}[thm]{Corollary}

\begin{document}

\begin{frontmatter}

\title{Excluding a line from gammoids}
\author[1]{Zhen Chen}
\ead{chenzzhen@126.com}

\author[1]{Zhuo Li\corref{cor1}}
\ead{lzhuo@stu.xmu.edu.cn}

\cortext[cor1]{Corresponding author.}

\address[1]{School of Mathematical Sciences,
Xiamen University, P. R. China}
\begin{abstract}
For all positive integers $\ell$ and $r$, we prove that a finite simple
rank-$r$ gammoid with no $U_{2,\ell+2}$-minor has at most $\ell(r-1)+1$
elements. This gives an affirmative answer to Problem~7.2 of Boretsky
and Walsh [European J. Combin.\ 137
(2026), 104419], for both gammoids and transversal matroids. The bound is
sharp for every $\ell$ and $r$. The key estimate is that, for every finite simple gammoid $M$, the sum of $|L|-2$ over its long lines is at most $|E(M)|-r(M)$.
\end{abstract}

\begin{keyword}
Gammoid \sep Strict gammoid \sep Transversal matroid \sep
Excluded uniform minor \sep Long line
\MSC[2020] 05B35
\end{keyword}

\end{frontmatter}

\section{Introduction}\label{sec:introduction}

A basic problem in extremal matroid theory is to determine the largest
number of elements of a simple matroid in a given
class. Excluding a rank-two uniform minor provides a natural setting
for this problem. For integers $\ell\geq 2$ and $r\geq 1$, Kung
\cite{Kun93} proved that every simple rank-$r$ matroid with
no $U_{2,\ell+2}$-minor has at most $(\ell^r-1)/(\ell-1)$ elements.
For each fixed integer $\ell\geq 2$, let $q$ be the largest
prime power at most $\ell$. Geelen and Nelson \cite{GN10}
proved that, for all sufficiently large $r$, the maximum
number of elements of a simple rank-$r$ matroid with no
$U_{2,\ell+2}$-minor is $\frac{q^r-1}{q-1}.$
For complex-representable matroids, Geelen, Nelson, and Walsh
\cite{GNW24} proved that the maximum is
$(\ell-1)\binom{r}{2}+r$, again for sufficiently large rank.

Gammoids are matroids defined by vertex-disjoint directed paths.
They form a minor-closed class containing all transversal
matroids \cite{IP73,Mas72}. Boretsky and Walsh \cite[Theorems~1.2 and~1.3]{BW26} proved the sharp
linear bound $\ell(r-1)+1$ for simple rank-$r$ positroids with no
$U_{2,\ell+2}$-minor, and obtained the same extremal value for several
related classes, including lattice path, multi-path, bicircular, and
colaminar matroids. They then asked whether this bound also holds for
transversal matroids or, more generally, for gammoids
\cite[Problem~7.2]{BW26}.
We answer this question affirmatively.

\begin{thm}\label{thm:main}
Let $\ell,r\geq 1$ be integers. If $M$ is a simple rank-$r$ gammoid
with no $U_{2,\ell+2}$-minor, then
\begin{equation}\label{eq:main-bound}
  |E(M)|\leq \ell(r-1)+1.
\end{equation}
\end{thm}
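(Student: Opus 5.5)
The plan is to deduce the theorem from the key estimate announced in the abstract: for every finite simple gammoid $M$,
\[
  \sum_{L}(|L|-2)\leq |E(M)|-r(M),
\]
where the sum runs over the long lines of $M$, that is, rank-two flats with at least three elements. I would prove this estimate first as a separate lemma and then derive \eqref{eq:main-bound} by a counting argument through a fixed element.

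For the counting, the cases $r\leq 1$ are trivial since a simple matroid of rank at most one has at most one element. For $r\geq 2$, fix an element $e$. The lines of $M$ through $e$ partition $E(M)-e$. Since $M$ has no $U_{2,\ell+2}$-minor, each such line has at most $\ell+1$ elements. In $M/e$ these lines become the points of $\si(M/e)$, which is a rank-$(r-1)$ matroid. The naive bound $|E|\leq 1+\ell\cdot|\si(M/e)|$ is useless, since $|\si(M/e)|$ may be large. So instead I would double count over all elements. Each element lies on lines, and the lines through a fixed $e$ give
\[
  |E|-1=\sum_{L\ni e}(|L|-1).
\]
Summing over $e$ and splitting each line $L$ into its excess $|L|-2$ plus a trivial pair contribution,
\[
  |E|(|E|-1)=\sum_L |L|(|L|-1).
\]
This alone is not enough. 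A cleaner approach is to use the estimate directly. I would pick $e$ lying on some long line; if there is none, $M$ has no long line at all, and then the uniform $U_{2,\ell+2}$ exclusion with $\ell\geq 1$ gives nothing new, so I would instead need to handle the line-free case by induction. The most natural route is induction on $r$ via contraction. Choose $e$, and note that $\si(M/e)$ is a simple rank-$(r-1)$ gammoid with no $U_{2,\ell+2}$-minor, since gammoids are minor-closed. By induction, $|\si(M/e)|\leq \ell(r-2)+1$. Then
\[
  |E(M)|=1+|\si(M/e)|+\sum_{L\ni e}(|L|-2)\leq 1+\ell(r-2)+1+\sum_{L\ni e}(|L|-2).
\]
It therefore suffices to find an element $e$ with $\sum_{L\ni e}(|L|-2)\leq \ell-1$.

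The key estimate supplies such an $e$ by averaging. Each long line $L$ is counted $|L|$ times when we sum over $e$, and $|L|\leq \ell+1$, so
\[
  \sum_e\sum_{L\ni e}(|L|-2)\leq(\ell+1)\sum_L(|L|-2)\leq (\ell+1)(|E|-r).
\]
Hence some $e$ has
\[
  \sum_{L\ni e}(|L|-2)\leq (\ell+1)(1-r/|E|).
\]
If $|E|>\ell(r-1)+1$, this quantity is strictly less than $\ell+1-(\ell+1)r/(\ell(r-1)+2)$. Checking that this is at most $\ell$, so that the integer sum is at most $\ell-1$, reduces to $(\ell+1)r\geq \ell(r-1)+2$, which holds for $r\geq 1$. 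A cleaner alternative is to apply the estimate directly: if every line has at most $\ell+1$ points, one can try to bound $|E|$ globally, but I expect the averaging plus induction to be the intended route. I will double-check the arithmetic when writing it out.

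The main obstacle is the key estimate itself. For this I would reduce to strict gammoids, since every gammoid is a restriction of a strict gammoid and the strict gammoids are exactly the duals of transversal matroids. Deleting elements cannot increase $\sum_L(|L|-2)-|E|$ on the surviving lines by more than the number of deleted elements, which needs care because deleting an element may merge lines or shrink them. Then I would prove the estimate for duals of transversal matroids, or for transversal matroids themselves via presentations, by induction on $|E|$. The idea is to delete an element $e$ on a long line while keeping the rank, and charge the one-element loss of $|E|-r$ against the decrease in $\sum(|L|-2)$ over lines through $e$. The step to verify is that in a gammoid some element lies on at most one long line, or that the lines through some element share a common structure such as a common source vertex. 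This is where linkage and Menger-type arguments would enter, and I expect it to be the hardest part.
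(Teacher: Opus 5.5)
\textbf{Main gap: the key estimate is not proved, and your sketched route fails.} The estimate $W(M)\le|E(M)|-r(M)$ is left unproved, and your plan breaks at two points.

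First, passing to a strict gammoid $N$ with $N|S=M$ and deleting down does not transfer the inequality. Deletion never merges lines, but deleting an element on no long line lowers $|E|-r$ by one and leaves $W$ unchanged. So $W(N)\le|E(N)|-r(N)$ only yields $W(M)\le|E(N)|-r(N)$, the nullity of the extension rather than of $M$.

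Second, your element-deletion induction needs a non-coloop lying on at most one long line. Deleting $e$ lowers $W$ by the number of long lines through $e$, but lowers the nullity by only one. That requirement fails for gammoids. The dual of the bicircular matroid of $K_{3,3}$ is a simple rank-$3$ strict gammoid; its dual is transversal, presented by the six vertex stars. Its long lines are exactly the six vertex stars, so every element lies on two long lines and $W=6=|E|-r$. After deleting any element, the induction hypothesis gives only $W\le 5+2=7$.

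\textbf{The missing idea: a relative count inside a presentation.} The paper first makes $N$ simple; loops and surplus parallel elements outside $S$ can be deleted while keeping $N$ strict. Then each $\widehat L=\cl_N(L)$ with $L\in\cL(M)$ is a cyclic flat with $\alpha_N(\widehat L)=|\widehat L|-2$. By Theorem~\ref{thm:multiplicities}, $\widehat L$ occupies exactly that many indices of a maximal rank-sized presentation of $N^*$. The elements of $A=E(N)\setminus S$ lying in no $\widehat L$ can only be matched to the remaining indices. Comparing this with the dual rank formula, using $\widehat L\subseteq\cl_N(S)$, gives $W(M)+J-|A_2|\le|S|-r(M)$. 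Here $A_2$ is the set of elements of $A$ lying in some $\widehat L$, and $J=\sum_L|\widehat L\cap A|\ge|A_2|$. This is exactly how the extra elements of $A$ are absorbed.

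\textbf{The counting step has a reversed inequality.} Your derivation of the theorem from the estimate follows the paper's route: induction on $r$, the identity $|E(M)|=1+|E(\si(M/e))|+\sum_{L\ni e}(|L|-2)$, and double counting with $|L|\le\ell+1$. But $(\ell+1)(1-r/|E|)$ increases with $|E|$, so $|E|>\ell(r-1)+1$ does not make it small. Once $|E|\ge(\ell+1)r$, you only get an element with $\sum_{L\ni e}(|L|-2)\le\ell$.

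The paper avoids this by first restricting to a set of $n=\ell(r-1)+2$ elements containing a basis. This is still a simple rank-$r$ gammoid with no $U_{2,\ell+2}$-minor. Then every element has $\Delta_M(e)\ge\ell+1$, and summing gives $\ell n\le(\ell+1)(n-r)$, contradicting $(\ell+1)r-n=r+\ell-2>0$.

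Running your averaging twice also works. The first pass gives $|E|\le\ell(r-1)+2<(\ell+1)r$, and then a second pass gives an element with sum at most $\ell-1$. Your separate worry about the case with no long lines is unnecessary.
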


Since every transversal matroid is a gammoid, we obtain the following
consequence.

\begin{cor}\label{cor:transversal}
Let $\ell,r\geq 1$ be integers. Every simple rank-$r$ transversal
matroid with no $U_{2,\ell+2}$-minor has at most $\ell(r-1)+1$ elements.
\end{cor}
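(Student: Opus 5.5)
The corollary follows at once from Theorem~\ref{thm:main}: every transversal matroid is a gammoid \cite{IP73,Mas72}. So a simple rank-$r$ transversal matroid $M$ with no $U_{2,\ell+2}$-minor is a simple rank-$r$ gammoid with no $U_{2,\ell+2}$-minor, and \eqref{eq:main-bound} applies to it directly. The only point to check is that simplicity and the excluded minor are intrinsic properties of the matroid $M$, so they do not depend on whether we view $M$ as transversal or as a gammoid.

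For completeness, here is the containment I would invoke. Let $M$ have presentation $(A_1,\dots,A_m)$ on ground set $E$. Build a digraph with vertex set $E\cup\{a_1,\dots,a_m\}$ and an arc $e\to a_i$ whenever $e\in A_i$. With target set $B=\{a_1,\dots,a_m\}$, a set $X\subseteq E$ is linked into $B$ by vertex-disjoint paths exactly when $X$ is a partial transversal. Restricting the resulting strict gammoid to $E$ shows that $M$ is a gammoid.

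There is no real obstacle here; all the difficulty lies in Theorem~\ref{thm:main} itself. For that theorem, the plan (following the abstract) is to prove the long-line estimate $\sum_L(|L|-2)\le |E(M)|-r(M)$ over the long lines of $M$. One would then combine it with the fact that, since $M$ has no $U_{2,\ell+2}$-minor, every line of $M$ has at most $\ell+1$ points.
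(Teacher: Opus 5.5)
Your proposal is correct and matches the paper's proof: the corollary is deduced from Theorem~\ref{thm:main} because every transversal matroid is a gammoid, and your digraph with new target vertices $a_1,\dots,a_m$ is the same construction recalled in Section~\ref{subsec:presentations}. Your closing sketch of the theorem is not needed for the corollary, and it leaves out the contraction identity and rank induction that the paper also uses, but this does not affect the argument here.
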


Both bounds are sharp for every $\ell,r\geq 1$. Indeed, Boretsky and
Walsh \cite[Corollary~6.11]{BW26} give simple rank-$r$ lattice path
matroids with no $U_{2,\ell+2}$-minor and exactly $\ell(r-1)+1$
elements. Lattice path matroids are transversal
\cite{BW26}, so these examples establish sharpness in
both classes.

Our main tool is the following inequality. A \emph{long line} of a simple matroid is a rank-two flat
with at least three elements; write $\cL(M)$ for the set of long lines
of $M$.

\begin{prop}\label{prop:excess}
For every simple gammoid $M$,
\begin{equation}\label{eq:excess-bound}
  \sum_{L\in\cL(M)}(|L|-2)\leq |E(M)|-r(M).
\end{equation}
\end{prop}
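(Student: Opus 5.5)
We argue by induction on $|E(M)|$, using the structure of gammoids as contractions of strict gammoids (equivalently, as duals/restrictions of transversal matroids). Every gammoid is a restriction of a strict gammoid, and a strict gammoid is the dual of a transversal matroid. The plan is to reduce the statement to a single-element deletion step. We show that a simple gammoid $M$ with at least one long line has an element $e$ such that $M\setminus e$ is still a gammoid (automatic, since the class is minor-closed), $r(M\setminus e)=r(M)$, and
\[
\sum_{L\in\cL(M)}(|L|-2)-\sum_{L\in\cL(M\setminus e)}(|L|-2)\le 1 .
\]
Induction then gives \eqref{eq:excess-bound}. The base case is a simple gammoid with no long lines, where the left side is $0$ and $|E(M)|\ge r(M)$ holds trivially.

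Deleting $e$ reduces the left side by exactly the number of long lines through $e$. A line with $|L|\ge 4$ drops by one. A line with $|L|=3$ drops by one, since it ceases to be long and its contribution $1$ becomes $0$. So we need an element $e$ that lies on at most one long line and is not a coloop. Note that $e$ lies on some long line, so $e$ is not a coloop; hence $r(M\setminus e)=r(M)$ holds automatically. The whole proposition therefore reduces to the following key lemma: \emph{every simple gammoid with a long line has an element lying on exactly one long line.}

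This lemma is the main obstacle, and it is genuinely a gammoid property. In $PG(2,2)$ every point lies on three long lines, and $PG(2,2)$ is not a gammoid, which is consistent. To prove the lemma, we represent $M$ by a linkage system $(D,S,T)$, with $E(M)\subseteq V(D)$ and the independent sets being those linked into $T$. We choose a representation minimizing $|V(D)|$, or lexicographically minimal in some sense.

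The approach is to take a vertex $e\in E(M)$ that is extremal in the digraph, for instance a source vertex in $E(M)$ farthest from $T$ in an appropriate order. Then we analyse the long lines through $e$ via Menger/linkage arguments. Suppose $e$ lies on two long lines $L_1,L_2$ with $L_1\cap L_2=\{e\}$. We pick $a,b\in L_1\setminus e$ and $c,d\in L_2\setminus e$. Rank-two conditions mean that no three of $e,a,b$ link to $T$, but pairs do. This forces a common bottleneck (a separator of size $2$) for each line.

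Extremality of $e$ should give a contradiction. The two size-$2$ separators for $L_1$ and $L_2$ both cut $e$ off, and we would use submodularity of the separator function $\kappa(X)$, the minimum size of an $X$–$T$ separator, whose rank function is exactly $r$. From this we aim to produce either a third element on a common flat forcing $r(L_1\cup L_2)\le 2$, or an element farther from $T$ than $e$.

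If this direct extremal argument stalls, the fallback is an alternative route. Since gammoids are closed under duality up to representation, one could instead work with the transversal (dual) picture. There, long lines correspond to cocircuit-type structures of bipartite presentations, and one would count via a presentation $(A_1,\dots,A_r)$ with maximal sets. A long line $L$ would then correspond to $L$ being the set of elements meeting only two of the sets in a specific way, and the count $\sum(|L|-2)\le |E|-r$ would come from a charging argument assigning to each long line $|L|-2$ distinct non-basis elements.

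Choosing the charging injectively is where the work lies. We would try charging each line's excess to elements of $L$ that are minimal in the digraph order, arguing that distinct lines receive disjoint charges.
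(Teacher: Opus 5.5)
\textbf{There is a genuine gap: the key lemma is false.} It fails even in the weaker form you actually need, namely that some element lies on at most one long line. Let $G=K_{3,3}$, and let $B$ be the transversal matroid on $E(G)$ presented by the six vertex stars $(\delta(v):v\in V(G))$. This is the bicircular matroid of $G$; it has rank $6$, and $r_B(Y)$ equals $6$ minus the number of acyclic components of $(V(G),Y)$, isolated vertices included.

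Then $N=B^*$ is a strict gammoid with $r_N(X)=|X|+r_B(E(G)\setminus X)-6$, so $r(N)=3$. Its structure is as follows.
\begin{itemize}
\item $N$ is simple: removing at most two edges from $G$ leaves it connected with a cycle.
\item A $3$-set $X$ has $r_N(X)=2$ exactly when $G\setminus X$ is a $K_{2,3}$ plus an isolated vertex, that is, when $X=\delta(v)$.
\item Each star is a flat: adding any edge $f$ leaves $G\setminus(\delta(v)\cup\{f\})$ connected and unicyclic on the other five vertices, so the rank rises to $3$.
\end{itemize}
Hence the long lines of $N$ are exactly the six stars, each of size $3$, and every element lies on exactly two of them.

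Here $W(N)=6=|E(N)|-r(N)$, while $W(N\setminus e)=4<5$ for every $e$. Each deletion lowers the left side by $2$ but the right side by only $1$, so the inequality for $N\setminus e$ yields only $W(N)\leq 7$. No single-element-deletion induction on the statement itself can reach this tight example. Consequently no extremal-vertex or separator argument can establish your lemma.

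\textbf{Your fallback points toward the paper's route but lacks its essential ingredients.}
\begin{itemize}
\item \emph{Getting a transversal dual.} The dual of a gammoid is only a gammoid. You must first realize $M$ as $N|S$ with $N$ a \emph{simple} strict gammoid, which is possible because deleting loops and surplus parallel elements outside $S$ preserves strictness. Then $N^*$ is transversal.
\item \emph{How long lines appear.} They are not sets of elements meeting two presentation members. Take a maximal rank-sized presentation $(P_1,\dots,P_b)$ of $N^*$, with $b=|E(N)|-r(N)$ (not $r$ members). Each cyclic flat $F$ of $N$ occurs as a member exactly $\alpha_N(F)$ times. For $\widehat L=\cl_N(L)$ this count is $|\widehat L|-2=|L|-2+c_L$, where $c_L=|\widehat L\setminus S|$.
\item \emph{The injective charging.} It goes to presentation indices, not elements. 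The lifted lines occupy $W(M)+J$ distinct indices, where $J=\sum_L c_L$.
\item \emph{Absorbing the extension elements.} Let $A_2$ be the elements of $E(N)\setminus S$ lying on some $\widehat L$, and let $X$ be the remaining elements of $E(N)\setminus S$. A basis of $N^*|X$ matches into uncharged indices, so $r_{N^*}(X)\leq b-W(M)-J$. The dual rank formula, together with $\widehat L\subseteq\cl_N(S)$, turns this into $W(M)+J-|A_2|\leq |S|-r(M)$. Finally $|A_2|\leq J$ gives the bound.
\end{itemize}
Your proposal to charge each line to digraph-minimal elements of $L$ contains none of these steps, so the fallback is not a proof.
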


Section~\ref{sec:preliminaries} introduces our notation and
recalls the required facts about gammoids and transversal
presentations.
Section~\ref{sec:excess} proves Proposition~\ref{prop:excess}
using simple strict-gammoid extensions and a counting argument
in their transversal duals.
Finally, Section~\ref{sec:main-proof} combines this estimate
with a contraction identity and induction on rank to prove
Theorem~\ref{thm:main} and Corollary~\ref{cor:transversal}.

\section{Definitions and notation}\label{sec:preliminaries}

All matroids and directed graphs in this paper are finite. We use
standard matroid terminology and refer to Oxley \cite{Oxl11} for
background.

\subsection{Matroids}\label{subsec:matroids}

For a matroid $M$, we write $E(M)$ for its ground set and $r_M$ for
its rank function, and set $r(M)=r_M(E(M))$. A set $I\subseteq E(M)$
is \emph{independent} if $r_M(I)=|I|$. A \emph{basis} is a maximal
independent set, and a \emph{circuit} is a minimal dependent set.
The \emph{nullity} of $X\subseteq E(M)$ is $|X|-r_M(X)$.

The closure of $X\subseteq E(M)$ is
\[
  \cl_M(X)=\{e\in E(M):r_M(X\cup\{e\})=r_M(X)\}.
\]
A set $F$ is a \emph{flat} if $\cl_M(F)=F$. An element $e$ is a
\emph{loop} if $r_M(\{e\})=0$, and a \emph{coloop} if it belongs to
every basis. Distinct nonloops $e$ and $f$ are \emph{parallel} if
$r_M(\{e,f\})=1$. A matroid is \emph{simple} if it has no loops or
parallel pairs. A flat $F$ is \emph{cyclic} if $M|F$ has no coloops,
or equivalently if $F$ is a union of circuits.

We write $M|S$ for restriction to $S$ and
$M\setminus S=M|(E(M)\setminus S)$ for deletion. Restriction preserves
the rank of every subset of $S$, and
\begin{equation}\label{eq:restriction-closure}
  \cl_{M|S}(X)=\cl_M(X)\cap S\qquad (X\subseteq S).
\end{equation}
Contraction of $S$ is denoted by $M/S$ and has rank function
\begin{equation}\label{eq:contraction-rank}
  r_{M/S}(X)=r_M(X\cup S)-r_M(S)
  \qquad (X\subseteq E(M)\setminus S).
\end{equation}
A matroid $N$ is an \emph{extension} of a matroid $M$ if
$E(M)\subseteq E(N)$ and $N|E(M)=M$.

We abbreviate $M/\{e\}$ and $M\setminus\{e\}$ to $M/e$ and
$M\setminus e$. A \emph{minor} is obtained by a sequence of deletions
and contractions. The dual matroid $M^*$ has as its bases the
complements of the bases of $M$, and
\begin{equation}\label{eq:dual-rank}
  r_{M^*}(X)=|X|+r_M(E(M)\setminus X)-r(M)
  \qquad (X\subseteq E(M)).
\end{equation}

A simplification $\si(M)$ is obtained by deleting all loops and keeping
one element from each nonloop parallel class. Its cardinality and rank
are independent of the chosen representatives. For a simple matroid
$M$ of rank at least two and an element $e\in E(M)$, put
\begin{equation}\label{eq:loss-definition}
  \Delta_M(e)=|E(M)|-|E(\si(M/e))|.
\end{equation}
Thus $\Delta_M(e)$ counts the elements lost under contraction of $e$
followed by simplification.

A \emph{line} is a rank-two flat. For integers $0\leq k\leq m$, the uniform matroid $U_{k,m}$ is the matroid on an $m$-element ground set whose independent sets are precisely the subsets of size at most $k$. In particular, the restriction of a simple
matroid to a line $L$ is $U_{2,|L|}$. For a simple matroid $M$, we
write $\cL(M)$ for its lines of size at least three and define its
\emph{long-line excess} by
\begin{equation}\label{eq:excess-definition}
  W(M)=\sum_{L\in\cL(M)}(|L|-2).
\end{equation}
In this notation,
Proposition~\ref{prop:excess} states that $W(M)\leq |E(M)|-r(M)$.

\subsection{Gammoids and transversal presentations}\label{subsec:presentations}

Let $D$ be a directed graph, and let $T,E\subseteq V(D)$. A set
$I\subseteq E$ is \emph{linkable to $T$} if there are pairwise
vertex-disjoint directed paths whose initial vertices are exactly
$I$ and whose terminal vertices lie in $T$. Paths of length zero
are allowed. The sets of $E$ linkable to $T$ are the independent sets
of a \emph{gammoid}. If $E=V(D)$, the gammoid is \emph{strict}.
Consequently, every gammoid is a restriction of a strict gammoid:
keep the graph and the target set, and take all vertices as the
ground set.

An indexed family $\mathcal P=(P_1,\ldots,P_b)$ of subsets of $E$
presents a \emph{transversal matroid} if its independent sets are
precisely the sets $I\subseteq E$ admitting an injection
$f:I\longrightarrow[b]$ with $x\in P_{f(x)}$ for every $x\in I$.
Here $[b]=\{1,\ldots,b\}$, with $[0]=\varnothing$. Members of the
presentation may repeat; repeated members have distinct indices.
Every transversal matroid is a gammoid: introduce new vertices
$t_1,\ldots,t_b$ and an arc $x\to t_i$ whenever $x\in P_i$, use $E$
as the ground set, and take $\{t_1,\ldots,t_b\}$ as the target set.
The resulting linkages are precisely matchings to presentation indices.

We recall that gammoids are closed under taking minors, and that
a matroid is a strict gammoid if and only if its dual is
transversal \cite{IP73,Mas72}.
Transversal matroids are closed under deletion: if
$(P_1,\ldots,P_b)$ presents $T$ and $X\subseteq E(T)$, then
$(P_1\setminus X,\ldots,P_b\setminus X)$ presents $T\setminus X$.

Let $\mathcal{P}=(P_1,\ldots,P_b)$ be a presentation of a
transversal matroid $T$.
We call $\mathcal{P}$ \emph{rank-sized} if $b=r(T)$.
It is \emph{maximal} if no $P_i$ can be replaced by a proper
superset in $E(T)$, with all other members fixed, without
changing the presented matroid.

We use the following consequence of the presentation theorem
of Brualdi and Dinolt; see \cite[Proposition~3.7]{FO22}.

\begin{lem}\label{lem:maximal-rank-sized-presentation}
Every transversal matroid has a maximal rank-sized presentation.
\end{lem}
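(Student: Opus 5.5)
The plan is to first produce a rank-sized presentation, and then make it maximal by a finite extremality argument. Throughout, I view a presentation $(P_1,\ldots,P_b)$ of $T$ as a bipartite graph $G$ between $E=E(T)$ and the index set $[b]$, with $x$ adjacent to $i$ when $x\in P_i$. The independent sets of $T$ are then exactly the subsets of $E$ covered by some matching of $G$.

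\textbf{Step 1: reduction to $r(T)$ sets.} Let $r=r(T)$, and consider the subsets of $[b]$ that are covered by some matching of $G$. These form a transversal matroid $N$ on $[b]$. Its rank equals the maximum matching size of $G$, which is $r$. Choose a basis $J$ of $N$, so $|J|=r$.

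I claim that $(P_j)_{j\in J}$ still presents $T$. Deleting members of the family can only shrink the family of independent sets, so it suffices to show the following: every $I\subseteq E$ covered by a matching $m_1$ of $G$ is also matchable into $J$. Let $m_2$ be a matching covering $J$. By the Mendelsohn--Dulmage theorem, there is a matching $m$ of $G$ that covers every vertex of $I$ and every vertex of $J$.

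Now count. The $E$-endpoints of $m$ form an independent set of $T$, so $|m|\le r$. On the other hand, $m$ covers all of $J$, so $|m|\ge |J|=r$. Hence $|m|=r$, and the index endpoints of $m$ are exactly $J$. In particular, every element of $I$ is matched by $m$ into $J$, as required.

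This Mendelsohn--Dulmage step is the only genuinely nontrivial point. It is the standard fact that one may discard superfluous sets of a presentation. If one prefers, it can be cited from the Brualdi--Dinolt presentation theory, as in \cite[Proposition~3.7]{FO22}.

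\textbf{Step 2: maximality.} Among all rank-sized presentations of $T$, which exist by Step 1, choose $(P_1,\ldots,P_r)$ maximizing $\sum_i|P_i|$. This maximum exists because the sum is bounded by $r|E|$.

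Suppose some $P_i$ could be replaced by a proper superset $P_i'\subseteq E$, with the other members fixed, without changing the presented matroid. The new family would still have $r=r(T)$ members, so it would be rank-sized. Its size sum would be strictly larger, contradicting the choice. Hence the chosen presentation is maximal and rank-sized, which proves Lemma~\ref{lem:maximal-rank-sized-presentation}.
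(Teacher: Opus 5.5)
Your proof is correct, but it takes a different route from the paper. The paper does not argue the lemma at all; it imports it from the Brualdi--Dinolt presentation theorem via \cite[Proposition~3.7]{FO22}.

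You instead give a self-contained argument in two independent pieces.
\begin{itemize}
\item The Mendelsohn--Dulmage theorem, together with the count $r\le|m|\le r$, shows that the index set $J$ of any maximum matching already supports a presentation of $T$. That count is exactly what forces every element of $I$ to be matched into $J$, and it is right.
\item Maximality is then a purely extremal consequence. Enlarging one member of a rank-sized presentation without changing $T$ keeps it rank-sized and strictly increases $\sum_i|P_i|$.
\end{itemize}

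The maximality you obtain is exactly the notion the paper defines. By monotonicity of the presented matroid under enlarging a member, it also agrees with the single-element version common in the literature. So your presentation can be fed directly into Theorem~\ref{thm:multiplicities}, which is stated for an arbitrary maximal rank-sized presentation.

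Your route buys transparency: it shows that the only real content of the lemma is the classical reduction to $r(T)$ members. The cited theory gives more than existence, namely uniqueness of the maximal presentation and a description of its members through cyclic flats. That structural information underlies Theorem~\ref{thm:multiplicities}, but the lemma itself does not need it.
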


Thus, for a strict gammoid $N$, we may choose a maximal
presentation of $N^*$ with exactly $|E(N)|-r(N)$ members.

\subsection{The alpha invariant}\label{subsec:alpha}

For a matroid $K$ and a set $X\subseteq E(K)$, Mason's \emph{alpha invariant}
is defined recursively by
\begin{equation}\label{eq:alpha-definition}
  \alpha_K(X)=|X|-r_K(X)
  -\sum_{\substack{F\subsetneq X\\F\text{ a flat of }K}}\alpha_K(F).
\end{equation}
In particular, $\alpha_K(\varnothing)=0$.

\begin{lem}[\cite{BKM11}]\label{lem:noncyclic-alpha}
If $F$ is a noncyclic flat of a matroid $K$, then
$\alpha_K(F)=0$.
\end{lem}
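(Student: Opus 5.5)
We argue by strong induction on $|F|$, using only the recursive definition \eqref{eq:alpha-definition}. Rearranged, the definition says that for every set $X\subseteq E(K)$,
\[
  \sum_{\substack{G\subsetneq X\\ G\text{ a flat of }K}}\alpha_K(G)+\alpha_K(X)=|X|-r_K(X).
\]
In particular, if $X$ is a flat, then the sum of $\alpha_K(G)$ over all flats $G\subseteq X$, including $X$ itself, equals the nullity of $X$. This identity is the only tool needed.

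Let $F$ be a noncyclic flat. Then $K|F$ has a coloop $e$; if $F=\varnothing$ there is nothing to prove, since $\alpha_K(\varnothing)=0$. We first check that $F-e$ is again a flat. Indeed, $\cl_K(F-e)\subseteq\cl_K(F)=F$. Moreover $e\notin\cl_K(F-e)$, because $e$ is a coloop of $K|F$, so $r_K(F-e)=r_K(F)-1$. Hence $\cl_K(F-e)=F-e$. The same rank equality shows that $F-e$ and $F$ have the same nullity.

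Next we split the flats $G\subsetneq F$ into two kinds.
\begin{itemize}
\item Flats not containing $e$. These are exactly the flats contained in $F-e$, including $F-e$ itself.
\item Flats containing $e$. For such a $G$, the element $e$ is still a coloop of $K|G$, since coloops survive restriction to a set containing them. So $G$ is a noncyclic flat with $|G|<|F|$, and the induction hypothesis gives $\alpha_K(G)=0$.
\end{itemize}

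Therefore
\[
  \sum_{\substack{G\subsetneq F\\ G\text{ a flat}}}\alpha_K(G)=\sum_{\substack{G\subseteq F-e\\ G\text{ a flat}}}\alpha_K(G)=|F-e|-r_K(F-e)=|F|-r_K(F).
\]
The middle equality is the identity from the first paragraph applied to the flat $F-e$. Substituting into \eqref{eq:alpha-definition} gives $\alpha_K(F)=0$.

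The only point needing care is verifying that $F-e$ is a flat, so that the full-sum identity applies to it. The rest is bookkeeping.
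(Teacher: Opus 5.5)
Your proof is correct and complete. The paper gives no argument for this lemma; it quotes it from \cite{BKM11}. So there is no in-paper route to compare with, and your strong induction on $|F|$ serves as a self-contained replacement that uses nothing beyond the recursive definition \eqref{eq:alpha-definition}.

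Both of your key checks are right. For a coloop $e$ of $K|F$, the set $F\setminus\{e\}$ is a flat with the same nullity as $F$. Every flat $G$ with $e\in G\subsetneq F$ is again noncyclic, because $e\notin\cl_K(F\setminus\{e\})\supseteq\cl_K(G\setminus\{e\})$. Hence, by induction, the proper flats of $F$ contribute exactly the nullity of $F\setminus\{e\}$, and $\alpha_K(F)=0$. Citing keeps the paper short, while your version makes plain that the lemma is purely formal bookkeeping.

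One cosmetic point: the case $F=\varnothing$ never arises. The matroid $K|\varnothing$ has no coloops, so the empty set, when it is a flat, is cyclic, and that remark can be dropped.
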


Consequently, the sum in \eqref{eq:alpha-definition}
may be taken over cyclic flats of $K$ properly contained in $X$. 
We also use the following description of multiplicities in a maximal
presentation; see \cite[Lemma~2.9]{Tof25}.

\begin{thm}\label{thm:multiplicities}
Let $Q$ be a strict gammoid, and let $(P_1,\ldots,P_b)$ be a maximal
rank-sized presentation of $Q^*$. For every cyclic flat $F$ of $Q$,
\[
  \bigl|\{i\in[b]:P_i=F\}\bigr|=\alpha_Q(F).
\]
\end{thm}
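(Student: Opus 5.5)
The plan is to identify the members of the presentation as cyclic flats of $Q$, and then to recover the multiplicities from nullities by inverting the recursion \eqref{eq:alpha-definition}. Write $E=E(Q)$ and $T=Q^*$, and let $m(F)=|\{i\in[b]:P_i=F\}|$.

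\emph{Step 1: each $P_i$ is a cyclic flat of $Q$.} I will use the Brualdi--Dinolt structure of maximal presentations: in a maximal presentation of a transversal matroid $T$, each complement $E\setminus P_i$ is a cyclic flat of $T$. This is the same circle of ideas behind Lemma~\ref{lem:maximal-rank-sized-presentation}. If $P_i$ were not of this form, I expect one could enlarge $P_i$ by an element $x$ such that every matching in the enlarged family restricts to a matching for an independent set of $T$. That would contradict maximality. Cyclic flats of $T=Q^*$ are exactly the complements of cyclic flats of $Q$. Hence every $P_i$ is a cyclic flat of $Q$.

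\emph{Step 2: a nullity formula.} Fix a cyclic flat $X$ of $Q$, and put $Y=E\setminus X$, which is a cyclic flat of $T$. By \eqref{eq:dual-rank},
\[
  |X|-r_Q(X)=r(T)-r_T(Y)=b-r_T(Y).
\]
I then claim that
\[
  r_T(Y)=\bigl|\{i\in[b]:P_i\cap Y\neq\varnothing\}\bigr|=\bigl|\{i:P_i\not\subseteq X\}\bigr|.
\]
The inequality ``$\leq$'' is immediate, since any matching of a subset of $Y$ uses only such indices. For ``$\geq$'' I will use Rado's formula
\[
  r_T(Y)=\min_{A\subseteq Y}\Bigl(|Y\setminus A|+\bigl|\{i:P_i\cap A\neq\varnothing\}\bigr|\Bigr).
\]
I must show the minimum is attained at $A=Y$ when $Y$ is a cyclic flat. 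Suppose a minimiser $A\neq Y$ exists. Then the elements of $Y\setminus A$ are handled individually, which forces them to be coloops of $T|Y$ or lets some $P_i$ be enlarged. This contradicts either cyclicity of $Y$ or maximality of the presentation. This is the step I expect to be the main obstacle. It is where the hypotheses \emph{maximal} and \emph{cyclic} are genuinely used, and it requires a careful exchange argument.

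\emph{Step 3: inversion.} Combining Steps 1 and 2, for every cyclic flat $X$ of $Q$,
\[
  |X|-r_Q(X)=\bigl|\{i:P_i\subseteq X\}\bigr|=\sum_{\substack{F\subseteq X\\ F\text{ cyclic flat of }Q}} m(F).
\]
By Lemma~\ref{lem:noncyclic-alpha}, the recursion \eqref{eq:alpha-definition} states
\[
  |X|-r_Q(X)=\sum_{\substack{F\subseteq X\\ F\text{ cyclic flat of }Q}}\alpha_Q(F).
\]
Both $m$ and $\alpha_Q$ therefore satisfy the same triangular system over the lattice of cyclic flats. Induction on $|X|$ then gives $m(X)=\alpha_Q(X)$, which is the statement of the theorem.
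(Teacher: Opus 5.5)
The paper does not prove this statement; it imports it from \cite[Lemma~2.9]{Tof25}. Your three steps give a correct self-contained proof: the cyclic-flat structure of maximal presentations, a nullity count, and inversion over the lattice of cyclic flats. The citation buys brevity, while your route shows where each hypothesis enters. On that point, though, you have the difficulty misplaced.

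Step~2 uses neither maximality nor flatness. For any presentation and any cyclic set $Y$ of $T$, let $A$ minimise Rado's expression and take $x\in Y\setminus A$. Then $r_T(Y\setminus\{x\})\leq|Y\setminus A|-1+|\{i:P_i\cap A\neq\varnothing\}|=r_T(Y)-1$, so $x$ is a coloop of $T|Y$. Hence $A=Y$, and the ``enlarge some $P_i$'' alternative never occurs.

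Maximality is needed only in Step~1, which you should prove rather than leave as an expectation. It takes two facts.
\begin{enumerate}
\item For any presentation, $E\setminus P_i$ is a flat of $T$. A basis of $T|(E\setminus P_i)$ is matched avoiding index $i$, and $x\mapsto i$ extends that matching for every $x\in P_i$.
\item If $y$ is a coloop of $T|(E\setminus P_i)$, then replacing $P_i$ by $P_i\cup\{y\}$ presents the same matroid. Suppose $I'$ is matched into $[b]\setminus\{i\}$ but $I'\cup\{y\}$ has no matching in the original family. Let $R$ be the set of elements reachable from $y$ by alternating paths. The indices meeting $R$ are exactly the $|R|-1$ indices matched to $R\setminus\{y\}$, and $i$ is not among them because it is unmatched. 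Thus $R\subseteq E\setminus P_i$ and $y\in\cl_T(R\setminus\{y\})$, contradicting that $y$ is a coloop.
\end{enumerate}
Together these show that maximality makes every $E\setminus P_i$ a cyclic flat of $T$. Steps~2 and~3 then go through as you wrote them, with the rank-sized hypothesis entering only through $b=r(T)$.
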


The equality counts presentation indices, including repeated members.

\section{The long-line excess bound}\label{sec:excess}

In this section we prove Proposition~\ref{prop:excess}.
We first show that every simple gammoid is a restriction of
a simple strict gammoid. We then bound the long-line excess
of such a restriction by its own nullity.

We use the following standard fact; see
\cite[Section~3.2, Exercise~12(a)]{Oxl11}.

\begin{lem}\label{lem:parallel-deletion}
Let $Q$ be a strict gammoid, and let $x,y$ be distinct
parallel elements of $Q$. Then $Q\setminus y$ is a strict gammoid.
\end{lem}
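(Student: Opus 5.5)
The plan is to work directly with a digraph. Write $Q$ as the strict gammoid of a digraph $D$ with target set $T$, so $E(Q)=V(D)$. Then $Q\setminus y$ is the gammoid on $D$ with ground set $V(D)\setminus\{y\}$. The aim is to modify $D$ into a digraph $D'$ on vertex set $V(D)\setminus\{y\}$, with a suitable target set $T'$, whose strict gammoid has exactly the same linkable sets. The only input is that $x$ and $y$ are each linkable to $T$ while $\{x,y\}$ is not.

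First I would record the structural consequences of parallelism.

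\emph{Case $y\in T$.} Every directed path from $x$ to $T$ must pass through $y$. Otherwise that path, together with the trivial path at $y$, links $\{x,y\}$.

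\emph{Case $y\notin T$.} Every $x$--$T$ path and every $y$--$T$ path meet. More usefully, I would aim for a \emph{bottleneck} statement: there is a single vertex $z$ such that every path from $x$ to $T$ and every path from $y$ to $T$ passes through $z$. This should follow from Menger's theorem, since the maximum number of disjoint paths from $\{x,y\}$ to $T$ is $1$, so some one-vertex set separates $\{x,y\}$ from $T$.

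Next I would build $D'$ by bypassing $y$.

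\emph{If $y\notin T$:} delete $y$, add an arc $u\to w$ whenever $u\to y\to w$ in $D$, and keep $T'=T$.

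\emph{If $y\in T$:} delete $y$, put $T'=(T\setminus\{y\})\cup\{x\}$, and reroute arcs into $y$ so that they enter $x$ instead. This is natural because $x$ plays the role of a substitute sink for $y$.

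The verification has two directions. A linkage in $D$ of a set $I\subseteq V(D)\setminus\{y\}$ passes through $y$ on at most one path, so shortcutting gives a linkage in $D'$; in the case $y\in T$, that path is instead redirected to end at, or pass through, $x$. Conversely, in $D'$ at most one path can use a new arc, since new arcs all ``use'' $y$. However, a path that uses a new arc may be vertex-disjoint in $D'$ but not in $D$ only because of $y$, and $y$ is used by at most one path, so lifting back to $D$ is fine.

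The main obstacle is the second case, and more precisely the claim that redirecting into $x$ does not create new independent sets. A set $I$ containing $x$ that is linkable in $D'$ via the trivial path at the new target $x$ must already be linkable in $D$. I expect to handle this using the bottleneck vertex $z$ from Menger: any path into $y$ from some $u\neq x$ can be replaced by a route $u\leadsto z$ followed by $z\leadsto y$ or $z\leadsto x$, and the rank-one condition on $\{x,y\}$ guarantees that swapping the roles of $x$ and $y$ preserves linkability.

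If this rerouting becomes too delicate, I would fall back on duality. $Q\setminus y$ is a strict gammoid if and only if $(Q\setminus y)^*=Q^*/y$ is transversal. In $Q^*$ the set $\{x,y\}$ is a series pair, so $Q^*/y\cong Q^*/x$ after swapping the labels $x$ and $y$. I would then try to show, from a presentation of $Q^*$ (Lemma~\ref{lem:maximal-rank-sized-presentation}), that contracting one element of a series pair preserves transversality, by removing a single presentation member that contains $y$.
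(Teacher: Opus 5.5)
\paragraph{The case $y\notin T$ fails.}
Your construction for $y\notin T$ is incorrect, and the sentence meant to justify it is false. You claim that in $D'$ at most one path of a linkage can use a new arc, because all new arcs ``use'' $y$. But distinct new arcs $u_1\to w_1$ and $u_2\to w_2$ can lie on two vertex-disjoint paths of $D'$, and lifting both back to $D$ sends both through $y$. The parallel hypothesis does not prevent this.

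Let $D$ have vertices $a,b,x,y,t_1,t_2$, arcs $a\to y$, $b\to y$, $x\to y$, $y\to t_1$, $y\to t_2$, and $T=\{t_1,t_2\}$. Then $x$ and $y$ are parallel, since the only arc leaving $x$ enters $y$, and $y\notin T$. Your $D'$ contains $a\to t_1$ and $b\to t_2$, so $\{a,b\}$ is linkable in $D'$. In $D$ both paths would need $y$, so $\{a,b\}$ is dependent in $Q\setminus y$. In fact $\{a,b,x\}$ is a parallel class of $Q\setminus y$, whereas your $D'$ yields $U_{2,5}$.

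The Menger bottleneck does not rescue this: here $z=y$, and the bypass never uses $z$ anyway. As written, your argument proves the lemma only when $y\in T$.

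\paragraph{The case $y\in T$ is sound.}
The obstacle you flag there needs no Menger argument. Every $x$--$T$ path meets $y$, so no vertex reachable from $x$ in $D-y$ can reach $T\setminus\{y\}$ in $D-y$. In a $D'$-linkage, the paths not ending at $x$ use only original arcs and avoid $x$, so they miss all such vertices. Hence a path ending at $x$ can be extended in $D$ by an $x$--$y$ path and then shortcut. If its last arc was redirected from $u\to y$, simply end it at $y$ instead.

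\paragraph{Completing your duality fallback.}
You left this as ``I would try,'' but it goes through with the paper's tools.
\begin{itemize}
\item Take a maximal rank-sized presentation $(P_1,\ldots,P_b)$ of $Q^*$ (Lemma~\ref{lem:maximal-rank-sized-presentation}).
\item By the recursive definition, the sum of $\alpha_Q$ over all flats is $|E(Q)|-r(Q)=b$. Noncyclic flats contribute $0$ (Lemma~\ref{lem:noncyclic-alpha}). So Theorem~\ref{thm:multiplicities} forces every $P_j$ to be a cyclic flat of $Q$.
\item The flat $C=\cl_Q(\{x\})$ occurs $\alpha_Q(C)$ times, where $\alpha_Q(C)$ is the size of the parallel class of $x$ minus one, hence at least $1$. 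Say $P_i=C$.
\item Each $w\in C\setminus\{y\}$ is either a loop of $Q$, so it lies in every $P_j$, or parallel to $y$, so it lies in exactly the same $P_j$ as $y$.
\item Therefore any matching of an independent set $I\cup\{y\}$ of $Q^*$ can be changed to send $y$ to $i$, swapping with the element matched to $i$ if necessary. Conversely, a matching of $I$ into $[b]\setminus\{i\}$ extends by $y\mapsto i$.
\end{itemize}
So $(P_j\setminus\{y\})_{j\neq i}$ presents $Q^*/y=(Q\setminus y)^*$, and $Q\setminus y$ is a strict gammoid. This handles both cases at once, with no digraph surgery.
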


\begin{lem}\label{lem:simple-extension}
Every simple gammoid is a restriction of a simple strict gammoid.
\end{lem}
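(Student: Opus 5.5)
Let $M$ be a simple gammoid. By the remark in Section~\ref{subsec:presentations}, $M=Q_0|E(M)$ for some strict gammoid $Q_0$, namely the strict gammoid on a digraph $D$ with target set $T$ and ground set $V(D)\supseteq E(M)$. The plan is to remove loops and parallel elements of $Q_0$ lying outside $E(M)$, one at a time, keeping a strict gammoid whose restriction to $E(M)$ is still $M$. At the end no loops or parallel pairs remain.

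\emph{Loops.} A loop of a strict gammoid is a vertex from which no directed path reaches $T$. Let $Z$ be the set of such vertices. Since $M$ is simple, $Z\cap E(M)=\varnothing$. Delete the vertices of $Z$ from $D$. Any path from a vertex outside $Z$ to $T$ avoids $Z$, because every vertex on such a path can itself reach $T$. Hence the linkable sets among the remaining vertices are unchanged. Therefore $Q_0\setminus Z$ is the strict gammoid of $D-Z$ with target set $T$, it has no loops, and it still restricts to $M$ on $E(M)$.

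\emph{Parallel pairs.} Call the resulting strict gammoid $Q_1$; it is loopless. Suppose $Q_1$ has a parallel class $P$ with $|P|\ge 2$. Since $M$ is simple and $Q_1|E(M)=M$, at most one element of $P$ lies in $E(M)$. So we can choose $x,y\in P$ with $x\ne y$ and $y\notin E(M)$. By Lemma~\ref{lem:parallel-deletion}, $Q_1\setminus y$ is a strict gammoid. Deleting $y$ creates no loops and does not change the restriction to $E(M)$. Repeat this step. Each step lowers the size of the ground set, so the process terminates. It ends at a loopless strict gammoid $Q$ with no parallel pairs, that is, a simple strict gammoid with $Q|E(M)=M$.

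The only delicate point is that deleting a loop must keep the matroid strict. Lemma~\ref{lem:parallel-deletion} covers only parallel pairs, so I handle loops directly through the digraph as above. Alternatively, one can note that a loop $z$ of $Q_1$ is a coloop of the transversal matroid $Q_1^*$. Deleting it from the dual corresponds to contracting the coloop $z$ of $Q_1^*$, which is the same as deleting $z$ from $Q_1^*$. Since transversal matroids are closed under deletion, $(Q_1\setminus z)^*=Q_1^*\setminus z$ is transversal, and hence $Q_1\setminus z$ is a strict gammoid. Either argument suffices.
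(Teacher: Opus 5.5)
Your proof is correct and follows the paper's argument: first delete the loops of the strict extension (all of which lie outside $E(M)$) while preserving strictness, then repeatedly delete a member outside $E(M)$ of a parallel pair using Lemma~\ref{lem:parallel-deletion}. The paper handles the loops only by the duality argument you give as your alternative: a loop $z$ is a coloop of the transversal dual, so $(Q_0\setminus z)^*=Q_0^*/z=Q_0^*\setminus z$. Your primary digraph argument, deleting every vertex that cannot reach $T$, is an equally valid and more elementary route to the same step. In the alternative paragraph you write $Q_1$ where you mean $Q_0$, since $Q_1$ is already loopless.
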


\begin{proof}
Let $M$ be a simple gammoid on $S$. Its defining directed graph gives
a strict gammoid $N$ with $N|S=M$. We simplify this extension while
retaining every element of $S$.

Any loop $z$ of $N$ lies outside $S$. Since $N^*$ is transversal and
$z$ is a coloop of $N^*$, we have $N^*/z=N^*\setminus z$.
Deletion preserves transversality, so
$(N\setminus z)^*=N^*/z$ is transversal. Thus deleting $z$ preserves
strictness and the restriction to $S$. Delete all such loops.

The resulting extension is loopless. If it has a parallel pair,
at least one element of the pair lies outside $S$, since $N|S$ is simple. Delete such a member using
Lemma~\ref{lem:parallel-deletion}. Repeating this step eventually
removes all parallel pairs. Each deletion preserves strictness and the
restriction to $S$, so the final extension is simple and strict.
\end{proof}

We now bound the long-line excess of a restriction of a
simple strict gammoid by the nullity of the restriction.
\begin{lem}\label{lem:intrinsic-excess}
Let $N$ be a simple strict gammoid, let $S\subseteq E(N)$, and put
$M=N|S$ and $A=E(N)\setminus S$. 
Then
\begin{equation}\label{eq:intrinsic-excess}
  W(M)\leq |S|-r(M).
\end{equation}
\end{lem}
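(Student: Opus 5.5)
The plan is to pass to the dual and count presentation indices, using Theorem~\ref{thm:multiplicities} to show that each long line of $M$ forces many repeated members in a presentation of $N^*$. Since $M=N\setminus A$, we have $M^*=N^*/A$, and by \eqref{eq:contraction-rank}
\[
  |S|-r(M)=r(M^*)=r(N^*)-r_{N^*}(A).
\]
By Lemma~\ref{lem:maximal-rank-sized-presentation}, fix a maximal rank-sized presentation $(P_1,\ldots,P_b)$ of $N^*$, so that $b=r(N^*)$. It then suffices to show
\[
  r_{N^*}(A)\le b-W(M).
\]
To prove this, I will find a set $J\subseteq[b]$ of $W(M)$ indices such that some maximum matching of $A$ into the presentation avoids $J$.

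\textbf{Step 1: multiplicities of lines.} Let $L\in\cL(M)$ and put $F_L=\cl_N(L)$. This is a rank-two flat of $N$ with $F_L\cap S=L$ by \eqref{eq:restriction-closure}, and it has at least three elements. Since $N$ is simple, $N|F_L\cong U_{2,|F_L|}$ has no coloops, so $F_L$ is cyclic. The flats of $N$ properly contained in $F_L$ are $\varnothing$ and single points. These are noncyclic because $N$ is simple, so by Lemma~\ref{lem:noncyclic-alpha} they have $\alpha_N=0$. Hence \eqref{eq:alpha-definition} gives $\alpha_N(F_L)=|F_L|-2$. Theorem~\ref{thm:multiplicities} therefore says that exactly $|F_L|-2$ indices $i$ satisfy $P_i=F_L$. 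Distinct long lines of $M$ span distinct lines of $N$, since $F_L\cap S=L$, so these index classes are pairwise disjoint.

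\textbf{Step 2: the matching exchange.} Take a maximum matching of $A$ into $[b]$. It matches a set of $r_{N^*}(A)$ elements of $A$ injectively to indices $i$ with $x\in P_i$. An index $i$ with $P_i=F_L$ can only receive an element of $F_L\cap A=F_L\setminus L$. Therefore at most $|F_L|-|L|$ of the $|F_L|-2$ copies of $F_L$ are used, which leaves at least $|L|-2$ unused copies. Let $J$ consist of $|L|-2$ unused copies of $F_L$ for each $L\in\cL(M)$. By Step~1 these are disjoint, so $|J|=W(M)$, and the matching uses only indices in $[b]\setminus J$. Hence $r_{N^*}(A)\le b-|J|=b-W(M)$, which is the required inequality.

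\textbf{Main obstacle.} The crux is Step~1, where the multiplicity of each line in the presentation is computed from the $\alpha$ invariant. This step relies on simplicity of $N$ in two ways: it makes rank-two flats with at least three elements cyclic, and it makes all smaller flats noncyclic, so that $\alpha_N(F_L)$ reduces to $|F_L|-2$. Lemma~\ref{lem:simple-extension} is what supplies this simplicity. Once the multiplicities are known, Step~2 is a simple pigeonhole argument. The only care needed is that matched indices carrying a copy of $F_L$ must receive elements of $F_L\cap A$, whose size is exactly $|F_L|-|L|$.
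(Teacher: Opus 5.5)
Your proof is correct and is essentially the paper's argument: a maximal rank-sized presentation of $N^*$, exactly $|\cl_N(L)|-2$ copies of each lifted line by Theorem~\ref{thm:multiplicities}, and a maximum matching forced to miss many of those indices; the only difference is bookkeeping, since you bound $r_{N^*}(A)$ directly by noting that the copies of $\cl_N(L)$ can absorb at most $|\cl_N(L)\cap A|$ matched elements, whereas the paper discards the elements of $A$ lying on lifted lines, lets the matching avoid all copies, and then restores those elements with a union bound on their number. One harmless slip is that $\varnothing$ is a cyclic (not noncyclic) flat of the loopless matroid $N$, but $\alpha_N(\varnothing)=0$ follows directly from \eqref{eq:alpha-definition}, so your computation $\alpha_N(\cl_N(L))=|\cl_N(L)|-2$ still stands.
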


\begin{proof}
For each $L\in\cL(M)$, define
\[
  \widehat L=\cl_N(L),\qquad c_L=|\widehat L\cap A|.
\]
Set
\[
  J=\sum_{L\in\cL(M)}c_L,
  \qquad
  A_2=A\cap\bigcup_{L\in\cL(M)}\widehat L.
\]
Write $R=r(N)$, $r=r(M)$, and $a=|A|$. The dual $T=N^*$ is
transversal of rank $b=|S|+a-R.$
By lemma~\ref{lem:maximal-rank-sized-presentation}, we choose a maximal rank-sized presentation $(P_1,\ldots,P_b)$ of $T$.

For each $L\in\cL(M)$, equation~\eqref{eq:restriction-closure} gives
\begin{equation}\label{eq:lift-intersection}
  \widehat L\cap S=\cl_N(L)\cap S=\cl_M(L)=L.
\end{equation}
Thus the sets $\widehat L=\cl_N(L)$, indexed by $L\in\cL(M)$, are pairwise
distinct. For each $L\in\cL(M)$, the set $\widehat L$ is a
rank-two flat of $N$ containing at least three elements.
As $N$ is simple, $N|\widehat L$ is a rank-two uniform matroid
with no coloops. Thus $\widehat L$ is a cyclic flat of $N$.
The flats of $N$ properly contained in $\widehat L$ are the
empty set and the singleton subsets of $\widehat L$, all of
which have alpha invariant zero. Therefore
\begin{equation}\label{eq:lift-alpha}
  \alpha_N(\widehat L)=|\widehat L|-2=|L|-2+c_L.
\end{equation}

Define
\[
  \mathcal H
  =\{i\in[b]:P_i=\widehat L
      \text{ for some }L\in\cL(M)\}.
\]
Since the sets $\widehat L$ are pairwise distinct,
Theorem~\ref{thm:multiplicities} and \eqref{eq:lift-alpha} give
\begin{equation}\label{eq:lift-indices}
  |\mathcal H|
  =\sum_{L\in\cL(M)}\alpha_N(\widehat L)
  =W(M)+J.
\end{equation}

Put $X=A\setminus A_2$, and take a basis $I$ of $T|X$.
The presentation of $T$ supplies an injection
$f:I\longrightarrow[b]$ with $x\in P_{f(x)}$ for every $x\in I$.
By the definition of $A_2$, we have
$X\cap\widehat L=\varnothing$ for every $L\in\cL(M)$.
Hence $X\cap P_i=\varnothing$ for every $i\in\mathcal H$,
so $f(I)\subseteq[b]\setminus\mathcal H$. Consequently,
\begin{equation}\label{eq:complement-matching}
  r_T(X)=|I|\leq b-|\mathcal H|=b-W(M)-J.
\end{equation}

For every $L\in\cL(M)$, the inclusion $L\subseteq S$ gives
$\widehat L=\cl_N(L)\subseteq\cl_N(S)$.
Thus $A_2\subseteq\cl_N(S)$ and $r_N(S\cup A_2)=r$.
Since $E(N)\setminus X=S\cup A_2$, the dual rank formula
\eqref{eq:dual-rank} gives
\begin{equation}\label{eq:complement-rank}
\begin{aligned}
  r_T(X)
  &=|X|+r_N(E(N)\setminus X)-R\\
  &=a-|A_2|+r-R.
\end{aligned}
\end{equation}
Substituting \eqref{eq:complement-rank} and $b=|S|+a-R$ into
\eqref{eq:complement-matching} yields
\[
  a-|A_2|+r-R
  \leq |S|+a-R-W(M)-J.
\]
Rearranging, we obtain
\[
  W(M)+J-|A_2|\leq |S|-r(M).
\]
Finally,
\[
  |A_2|
  =\left|\bigcup_{L\in\cL(M)}(A\cap\widehat L)\right|
  \leq\sum_{L\in\cL(M)}|A\cap\widehat L|
  =J.
\]
Thus \eqref{eq:intrinsic-excess} follows.
\end{proof}

\begin{proof}[Proof of Proposition~\ref{prop:excess}]
By Lemma~\ref{lem:simple-extension}, there is a simple strict gammoid
$N$ with $N|E(M)=M$. Apply Lemma~\ref{lem:intrinsic-excess} with
$S=E(M)$ to obtain $W(M)\leq |E(M)|-r(M)$.
\end{proof}

\section{Proof of the excluded-line bound}\label{sec:main-proof}

We first express the loss under contraction and simplification in terms
of the long lines through the contracted element.

\begin{lem}\label{lem:contraction-loss}
Let $M$ be a simple matroid of rank at least two, and let $e\in E(M)$.
Then
\begin{equation}\label{eq:contraction-loss}
  \Delta_M(e)-1
  =\sum_{\substack{L\in\cL(M)\\e\in L}}(|L|-2).
\end{equation}
\end{lem}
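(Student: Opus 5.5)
The plan is to describe $\si(M/e)$ directly in terms of the lines of $M$ through $e$. Since $M$ is simple, $M/e$ has no loops other than those coming from $e$ itself; indeed $e$ is removed from the ground set. For $x\neq e$ we have $r_{M/e}(\{x\})=r_M(\{e,x\})-1=1$, so $M/e$ is loopless.

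Next, for distinct $x,y\in E(M)\setminus\{e\}$, formula \eqref{eq:contraction-rank} gives $r_{M/e}(\{x,y\})=r_M(\{e,x,y\})-1$. Hence $x$ and $y$ are parallel in $M/e$ exactly when $r_M(\{e,x,y\})=2$, that is, when $x$ and $y$ lie on a common line with $e$. Since $M$ is simple, each $x\neq e$ lies on exactly one line through $e$, namely $\cl_M(\{e,x\})$. Consequently the parallel classes of $M/e$ are precisely the sets $L\setminus\{e\}$ as $L$ ranges over the lines of $M$ containing $e$. These sets partition $E(M)\setminus\{e\}$, using $r(M)\geq 2$ so that such lines exist and every element is covered.

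Then $|E(\si(M/e))|$ equals the number of lines through $e$, and we can count:
\[
\Delta_M(e)=|E(M)|-\#\{L: e\in L\}=1+\sum_{L\ni e}\bigl(|L\setminus\{e\}|-1\bigr)=1+\sum_{L\ni e}(|L|-2).
\]
Lines with $|L|=2$ contribute zero, so the sum reduces to one over $L\in\cL(M)$ containing $e$, which is \eqref{eq:contraction-loss}.

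There is no real obstacle here. The only point needing care is verifying that the lines through $e$ partition $E(M)\setminus\{e\}$: two distinct lines meet in a flat of rank at most one, and by simplicity that flat is just $\{e\}$.
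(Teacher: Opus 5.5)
Your proof is correct and follows essentially the same route as the paper: the contraction rank formula identifies the parallel classes of $M/e$ with the sets $L\setminus\{e\}$ for lines $L$ through $e$, and counting the $|L|-2$ elements lost from each class gives the identity. Your explicit computation of $\Delta_M(e)$, and your check that distinct lines through $e$ meet only in $\{e\}$, spell out details the paper leaves implicit.
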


\begin{proof}
Simplicity of $M$ implies that $M/e$ is loopless: a loop $x$ of $M/e$
would satisfy $r_M(\{e,x\})=1$. For distinct $x,y\in E(M)\setminus\{e\}$,
the contraction rank formula~\eqref{eq:contraction-rank} gives
\[
  r_{M/e}(\{x,y\})=r_M(\{e,x,y\})-1.
\]
Thus $x$ and $y$ are parallel in $M/e$ exactly when $e,x,y$ lie on a
common line of $M$. It follows that the sets $L\setminus\{e\}$, as $L$
ranges over all lines through $e$, are precisely the parallel classes
of $M/e$. They partition $E(M)\setminus\{e\}$.

Contraction removes $e$, and simplification removes $|L|-2$ elements
from each class $L\setminus\{e\}$. Summing these losses gives the
identity, since lines of size two contribute zero.
\end{proof}

\begin{proof}[Proof of Theorem~\ref{thm:main}]
Fix $\ell\geq 1$ and induct on $r$. A simple rank-one matroid has
exactly one element, so the result holds when $r=1$.
Assume that $r\geq 2$ and that the result holds in rank $r-1$.
Suppose, for a contradiction, that a simple rank-$r$ gammoid $Q$ with
no $U_{2,\ell+2}$-minor violates \eqref{eq:main-bound}.

Put $n=\ell(r-1)+2$. Since $r\leq n\leq |E(Q)|$, we may
choose an $n$-element subset $S\subseteq E(Q)$ containing
a basis of $Q$. Then $M=Q|S$ is a simple rank-$r$ gammoid
with no $U_{2,\ell+2}$-minor, and
\begin{equation}\label{eq:normalized-size}
  |E(M)|=n=\ell(r-1)+2.
\end{equation}

For each $e\in E(M)$, the matroid $\si(M/e)$ is a simple rank-$(r-1)$
gammoid with no $U_{2,\ell+2}$-minor. Indeed, it is a minor of $M$,
and contraction of the nonloop $e$ lowers rank by one, while
simplification preserves rank. The induction hypothesis therefore gives
\[
  |E(\si(M/e))|\leq\ell(r-2)+1.
\]
Together with \eqref{eq:normalized-size}, this implies
\begin{equation}\label{eq:large-loss}
  \Delta_M(e)\geq\ell+1\qquad\text{for every }e\in E(M).
\end{equation}

Each long line $L$ of $M$ has at most $\ell+1$ elements; otherwise the
restriction to any $\ell+2$ elements of $L$ would be $U_{2,\ell+2}$.
Summing \eqref{eq:contraction-loss} over all elements counts each long
line once for each of its elements. Hence \eqref{eq:large-loss} and
Proposition~\ref{prop:excess} yield
\begin{equation}\label{eq:double-count}
\begin{aligned}
  \ell n
  &\leq \sum_{e\in E(M)}\bigl(\Delta_M(e)-1\bigr)\\
  &=\sum_{L\in\cL(M)}|L|(|L|-2)\\
  &\leq (\ell+1)W(M)\\
  &\leq (\ell+1)(n-r).
\end{aligned}
\end{equation}
The first and last terms imply $n\geq(\ell+1)r$. On the other hand,
\eqref{eq:normalized-size} gives
\[
  (\ell+1)r-n=r+\ell-2>0,
\]
because $r\geq 2$ and $\ell\geq 1$. This contradiction completes the
induction.
\end{proof}

\begin{proof}[Proof of Corollary~\ref{cor:transversal}]
The result follows from Theorem~\ref{thm:main}, since every
transversal matroid is a gammoid.
\end{proof}

\end{document}